\def\Aut{{\rm \textsf{Aut}}}
\def\Out{{\rm \textsf{Out}}}
\def\Soc{{\rm \textsf{Soc}}}
\def\Fix{{\rm \textsf{Fix}}}
\def\max{{\rm \textsf{max}}}
\begin{document}

\newtheorem{problem}{Problem}

\newtheorem{theorem}{Theorem}[section]
\newtheorem{lemma}[theorem]{Lemma}
\newtheorem{corollary}[theorem]{Corollary}
\newtheorem{definition}[theorem]{Definition}
\newtheorem{conjecture}[theorem]{Conjecture}
\newtheorem{question}[theorem]{Question}
\newtheorem{proposition}[theorem]{Proposition}
\newtheorem{quest}[theorem]{Question}
\newtheorem{example}[theorem]{Example}

\newenvironment{proof}{\noindent {\bf
Proof.}}{\rule{2mm}{2mm}\par\medskip}

\newenvironment{proofof}{\noindent {\bf
Proof of Theorem \ref{th1}}}{\rule{2mm}{2mm}\par\medskip}

\newcommand{\remark}{\medskip\par\noindent {\bf Remark.~~}}
\newcommand{\pp}{{\it p.}}
\newcommand{\de}{\em}

\title{  {Block-transitive point-primitive automorphism groups of Steiner $3$-designs} }

\author{Yunsong Gan, Weijun Liu\thanks{Corresponding author.
\newline E-mail addresses: songsirr@126.com (Y. Gan), wjliu6210@126.com (W. Liu). } \\
{\small School of Mathematics and Statistics, Central South University} \\
{\small New Campus, Changsha, Hunan, 410083, P.R. China. } }

\maketitle

\vspace{-0.5cm}

\begin{abstract}
This paper studies the long-standing open problem of the reduction of Steiner $3$-designs admitting a block-transitive automorphism group. We prove that if $G$ acts as a block-transitive point-primitive automorphism group of a nontrivial Steiner $3$-design, then $G$ is affine or almost simple.
\end{abstract}

{{\bf Key words:}
Steiner $3$-design;
automorphism group;
point-primitive;
block-transitive.}

\section{Introduction}

\label{sec1}

A $t$-$(v,k,\lambda)$ {\it design} $\mathcal{D}=(\mathcal{P},\mathcal{B})$ with parameters $(v,b,r,k,\lambda)$ is an incidence structure consisting of a set $\mathcal{P}$ of $v$ {\it points} and a set $\mathcal{B}$ of $b$ {\it blocks} such that every block is incident with $k$ points, every point is incident with $r$ blocks, and any $t$ distinct points are incident with exactly $\lambda$ blocks.
If furthermore $t<k<v-t$ holds, then $\mathcal{D}$ is said to be {\it nontrivial}.
All designs in this paper are assumed to be nontrivial.
A {\it flag} of $\mathcal{D}$ is an incident point-block pair $(\alpha,B)$, where $\alpha$ is a point and $B$ is a block incident with $\alpha$.

An {\it automorphism} of a $t$-design $\mathcal{D}$ is a permutation of the points which preserves the blocks.
The set of all automorphisms of $\mathcal{D}$ forms the {\it full automorphism group}, denoted by $\Aut(\mathcal{D})$.
If $G$ is a subgroup of $\Aut(\mathcal{D})$, then $G$ is called the {\it automorphism group} of $\mathcal{D}$.
Moreover, $G$ is said to be {\it point-primitive} (respectively {\it block-transitive, flag-transitive}) if it acts primitively on the points (respectively transitively on the blocks, transitively on the flags).
It is well known that if $G$ is block-transitive, then $G$ is also point-transitive (Block Lemma \cite{Bl67}).
For short, we say the design $D$ is, for example, block-transitive if $G$ is block-transitive.

A $t$-$(v,k,\lambda)$ design with $\lambda=1$ is called a {\it Steiner t-design}.
As a consequence of the classification of the finite simple groups, it has been possible to characterize Steiner $t$-designs with sufficiently strong transitivity properties.
Let $\mathcal{D}$ be a Steiner $t$-design admitting a point-primitive automorphism group $G$.
The O'Nan-Scott theorem shows that $G$ is permutation isomorphic to a group that is either of affine type, almost simple type, product type, simple diagonal type or twisted wreath type(see \cite{Lie88} for more details).
In recent decades, there has been a lot of research on the automorphism groups of Steiner $2$-designs.
One of the most significant contributions was the reduction of block-transitive point-primitive Steiner $2$-designs \cite{Camina96}.
Since then the effort has been to classify the block-transitive examples admitting the specific groups. The papers \cite{Lihuiling01} and \cite{Liuwj03} contain some discussions of the automorphism groups of block-transitive Steiner $2$-designs.

For a $t$-$(v,k,\lambda)$ design, Cameron and Praeger prove the following result in 1993.
\begin{theorem}{\rm\cite{Cameron93}}  \label{th1.1}
Let $\mathcal{D}=(\mathcal{P},\mathcal{B})$ be a $t$-$(v,k,\lambda)$ design with $t\geq2$ and let $G$ be a subgroup of $\Aut(\mathcal{D})$.
Then the following hold.
\begin{enumerate}[{\rm(i)}]

\item  If $G$ is block-transitive on $\mathcal{D}$, then $G$ is $\lfloor t/2 \rfloor$-homogeneous on points.

\item  If $G$ is flag-transitive on $\mathcal{D}$, then $G$ is $\lfloor (t+1)/2 \rfloor$-homogeneous on points.

\end{enumerate}
\end{theorem}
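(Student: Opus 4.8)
The plan is to reduce both parts to a rank property of inclusion matrices together with an orbit-counting argument in the spirit of Block's Lemma. I would set $s=\lfloor t/2\rfloor$, so that $2s\le t$, and for any $t'$-design $\mathcal{D}'=(\mathcal{P}',\mathcal{B}')$ and $0\le i\le t'$ I would write $N_i(\mathcal{D}')$ for the inclusion matrix with rows indexed by the $i$-subsets of $\mathcal{P}'$, columns indexed by $\mathcal{B}'$, and $(S,B)$-entry $1$ if $S\subseteq B$ and $0$ otherwise. The external input I would rely on is the rank form of the Ray-Chaudhuri--Wilson (generalised Fisher) inequality, which I shall call the Fact: \emph{if $\mathcal{D}'$ is a $t'$-design with $t'\ge 2i$ and $v'\ge k'+i$, then $N_i(\mathcal{D}')$ has full row rank $\binom{v'}{i}$ over $\mathbb{Q}$.} A nontrivial $t$-$(v,k,\lambda)$ design has $t<k<v-t$, hence $v>k+t\ge k+2s$, so the Fact applies to $\mathcal{D}$ with $i=s$. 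The standard route to the Fact is to note that, since $2i\le t'$, the Gram matrix $N_iN_i^{\top}$ lies in the Bose--Mesner algebra of the Johnson scheme $J(v',i)$ (its $(S,S')$-entry is the block count $\lambda_{|S\cup S'|}$, which depends only on $|S\cap S'|$) and then to check that all its eigenvalues, which are explicit signed sums of the $\lambda_j$, are strictly positive, so that $N_iN_i^{\top}$ is positive definite. This positivity is the one genuinely technical point, and the step I expect to be the main obstacle; I would quote it rather than reprove it, after which everything is formal.

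For (i) I would consider the tactical configuration $\mathcal{C}$ with point set $\binom{\mathcal{P}}{s}$, block set $\mathcal{B}$, and containment as incidence; its incidence matrix is $N_s(\mathcal{D})$, and since $G\le\Aut(\mathcal{D})$ this matrix is $G$-equivariant, i.e.\ a homomorphism of rational permutation modules $\mathbb{Q}[\mathcal{B}]\to\mathbb{Q}[\binom{\mathcal{P}}{s}]$. Because $\mathbb{Q}[G]$ is semisimple, the four-term exact sequence $0\to\ker N_s\to\mathbb{Q}[\mathcal{B}]\to\mathbb{Q}[\binom{\mathcal{P}}{s}]\to\mathrm{coker}\,N_s\to0$ stays exact after taking $G$-fixed subspaces, which gives
\[
\dim\mathbb{Q}[\textstyle\binom{\mathcal{P}}{s}]^{G}=\dim\mathbb{Q}[\mathcal{B}]^{G}-\dim(\ker N_s)^{G}+\dim(\mathrm{coker}\,N_s)^{G}.
\]
Here $\dim\mathbb{Q}[X]^{G}$ is the number of $G$-orbits on $X$; the Fact makes $\mathrm{coker}\,N_s=0$; and block-transitivity makes $\dim\mathbb{Q}[\mathcal{B}]^{G}=1$. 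Hence the number of $G$-orbits on $s$-subsets of $\mathcal{P}$ is $1-\dim(\ker N_s)^{G}$, which lies between $1$ and $1$, so $G$ is transitive on $s$-subsets, i.e.\ $\lfloor t/2\rfloor$-homogeneous. (For $t\in\{2,3\}$ this is exactly Block's Lemma, and the argument is its natural generalisation.)

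For (ii): if $t$ is even then $\lfloor(t+1)/2\rfloor=\lfloor t/2\rfloor=s$ and flag-transitivity implies block-transitivity, so (i) applies directly. If $t=2s+1$ is odd I would pass to the derived designs: for each point $x$, $\mathcal{D}_x$ is a $2s$-$(v-1,k-1,\lambda)$ design with $v-1>k-1+s$, so the Fact gives that $N_s(\mathcal{D}_x)$ has full row rank $\binom{v-1}{s}$. I would then form the tactical configuration whose points are the pairs $(x,S)$ with $x\in\mathcal{P}$ and $S$ an $s$-subset of $\mathcal{P}\setminus\{x\}$, whose blocks are the flags $(x,B)$ of $\mathcal{D}$, and in which $(x,S)$ is incident with $(y,B)$ exactly when $x=y$ and $S\subseteq B$. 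Ordering rows and columns by the value of $x$, its incidence matrix is block-diagonal with diagonal blocks $N_s(\mathcal{D}_x)$, hence of full row rank; it is $G$-equivariant; and $G$ acts transitively on its blocks precisely because $G$ is flag-transitive on $\mathcal{D}$. The same exact-sequence computation as in (i) then forces $G$ to be transitive on the pairs $(x,S)$, and composing with the $G$-equivariant surjection $(x,S)\mapsto S\cup\{x\}$ onto the $(s+1)$-subsets of $\mathcal{P}$ shows $G$ is $(s+1)$-homogeneous, where $s+1=\lfloor(t+1)/2\rfloor$.
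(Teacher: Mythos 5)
This statement is quoted in the paper from \cite{Cameron93} without proof, so there is no internal argument to compare against; your proof is correct and is essentially the original Cameron--Praeger argument, namely the generalised Block's Lemma orbit count applied to the $s$-subset inclusion matrix, whose full row rank is supplied by the Ray-Chaudhuri--Wilson theorem, together with (for odd $t$) the reduction to derived designs via flag-transitivity. No gaps.
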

According to this result, if the automorphism group $G$ acts flag-transitively on a Steiner $3$-design, then $G$ is either point-primitive of affine or almost simple type.
In 2005, Huber \cite{Huber05} completely classified flag-transitive Steiner $3$-designs.
However, so far there are few conclusions under the weaker assumption that the automorphism group $G$ acts block-transitively on a Steiner $3$-design.
In this paper we consider that $\mathcal{D}$ is a nontrivial Steiner $3$-design admitting a block-transitive point-primitive automorphism group $G$.
Depending on the classification of finite simple groups and the O'Nan-Scott theorem, we prove the following theorem.

\begin{theorem}\label{th1}
Let $G$ act as a block-transitive point-primitive automorphism group of a nontrivial Steiner $3$-design. Then $G$ is affine or almost simple.
\end{theorem}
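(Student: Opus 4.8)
\medskip\noindent\textbf{Proof strategy.}
The plan is to invoke the O'Nan--Scott theorem, which divides the point-primitive group $G$ into affine, almost simple, product action, simple diagonal and twisted wreath types, and to exclude the last three. In each of these $N:=\Soc(G)=T^{m}$ for a nonabelian simple group $T$ and some $m\ge 2$ (with $m\ge 6$ in the twisted wreath case), and $v$ is a proper power: $v=d^{m}$ in the product action case, where $d\ge 5$ is the degree of the (almost simple or simple diagonal) component $H$; $v=|T|^{m-1}$ in the simple diagonal case; and $v=|T|^{m}$ in the twisted wreath case. Two observations will be used throughout. First, the derived design of $\mathcal{D}$ at a point is a $2$-$(v-1,k-1,1)$ design, so by Fisher's inequality its number of blocks, which equals $r$, is at least $v-1$; hence $(k-1)(k-2)\le v-2$, so $k<\sqrt{v}+2$, and consequently $b=vr/k\ge v(v-1)/k$ exceeds a fixed positive multiple of $v^{3/2}$. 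Second, block-transitivity gives $b\mid|G|$ and point-transitivity gives $v\mid|G|$, so $|G_{\alpha}|=|G|/v\ge b/v$ exceeds a fixed positive multiple of $\sqrt{v}$.

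The key device is that block-transitivity makes the ``pair profile'' of a block constant: if $\mathcal{C}$ is any $G$-invariant set of unordered point-pairs, then every block contains the same number $p(\mathcal{C})$ of pairs from $\mathcal{C}$, and a double count against $\lambda_{2}$ gives $p(\mathcal{C})=|\mathcal{C}|\,k(k-1)/(v(v-1))$, so $v(v-1)\mid|\mathcal{C}|\,k(k-1)$ for every such $\mathcal{C}$. In the product action case the Hamming distance on $\Omega^{m}$ is $G$-invariant, so one may take $\mathcal{C}$ to be the set of pairs at distance $e$, of size $\tfrac12 v\binom{m}{e}(d-1)^{e}$; for $e=1$ this forces $(1+d+\cdots+d^{m-1})\mid m\binom{k}{2}$, and together with the analogous conditions for larger $e$, the divisibility $(k-2)\mid d^{m}-2$, the bound $k<d^{m/2}+2$ and the integrality of $r$ and $b$, a case analysis (for each $(d,m)$ only a handful of values of $k$ are admissible) is to show that no possibility survives. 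In the simple diagonal case $\mathcal{P}$ carries the action of $N=T^{m}$ whose orbits on pairs are indexed by conjugacy classes in $T$ (of $(m-1)$-tuples, identified under simultaneous conjugation and inversion); taking $\mathcal{C}$ to be the orbit arising from an involution $t$ in one coordinate, of size $\tfrac12 v\cdot|T|/|C_{T}(t)|$ with $|T|/|C_{T}(t)|$ coprime to $v-1$, the condition $v(v-1)\mid|\mathcal{C}|\,k(k-1)$ forces $(v-1)\mid k(k-1)$ (or, if that class size is odd, the even stronger $2(v-1)\mid k(k-1)$, which already outstrips the available range); hence $k(k-1)=v-1$, and then $(k-2)\mid v-2=k^{2}-k-1\equiv 1\pmod{k-2}$ gives $k=3$, contradicting $k>3$. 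This disposes of the simple diagonal type for all $m$.

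The twisted wreath case, where $\mathcal{P}$ is identified with $N$ acting regularly, does not respond to the pair-profile argument; here $G_{\alpha}$ is isomorphic to a transitive subgroup of $S_{m}$, so $|G_{\alpha}|\le m!$, while $|G_{\alpha}|$ exceeds a fixed multiple of $\sqrt{v}=|T|^{m/2}\ge 60^{m/2}$, which is impossible unless $m$ is large relative to $|T|$; the remaining range is closed using the defining requirements of the twisted wreath construction --- a point stabilizer of the degree-$m$ group $G_{\alpha}$ must have $T$ as a composition factor --- which exclude $G_{\alpha}\supseteq A_{m}$ outright (forcing $T\cong A_{m-1}$, whereupon $v$ is far too large) and, via lower bounds for the index of a transitive proper subgroup of $S_{m}$, everything else. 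I expect the bookkeeping of the second paragraph --- confirming that, across the entire product action family (and in the residual small cases of the other two), the pair-profile divisibilities together with $(k-2)\mid v-2$ and the integrality of $r$ and $b$ are never simultaneously satisfiable, invoking where necessary the classification of primitive permutation groups of small degree --- to be the main obstacle; the twisted wreath case and the purely numerical eliminations should be comparatively routine. Having excluded the product action, simple diagonal and twisted wreath types, $G$ is affine or almost simple, which is the assertion of Theorem \ref{th1}.
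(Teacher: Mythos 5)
Your O'Nan--Scott framework is the right one, your pair-profile identity $p(\mathcal{C})=|\mathcal{C}|\,k(k-1)/(v(v-1))$ for a $G$-invariant set of pairs $\mathcal{C}$ is correct and is exactly the mechanism behind the paper's Equations (\ref{eq1})--(\ref{eq2}), and $k<\sqrt{v}+2$ is Lemma \ref{lem2.4}. But each of your three exclusions has a genuine gap. The most serious is the product action with $m=2$: the numerical sieve you describe does not terminate there. For instance $w=17$, $v=289$, $k=9$ satisfies $(w+1)\mid k(k-1)$, $(k-2)\mid v-2$, $k\le\lfloor\sqrt{v}+3/2\rfloor$, the distance-two divisibility, and integrality of $r=1476$, $b=47396$ and of $x_2=4$, $x_3=140$; and degree-$17$ primitive groups with nonabelian socle exist (e.g.\ $L_2(16)$), so $L_2(16)\wr S_2$ in product action survives every condition you list. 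The paper closes $m=2$ by a structural argument, not numerics: from $k\le w+1$ it deduces $x_2<k$, then shows that the component stabilizers $(\Psi_1)_{p_1}$ and $(\Psi_2)_{q_i}$ both fix the block $B$ and (using a subdegree $\ge 3$ of the component) force $x_2\ge\binom{i-1}{2}+\sum_j\binom{a_j}{2}\ge k$, a contradiction. Nothing in your proposal supplies this step; likewise for $m\ge 3$ the paper first needs the fixed-point bound of Lemmas \ref{lem3.2}--\ref{lem3.3} to push $k$ down to $2w+2$ (resp.\ $3w+2$) before the counting bites.

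In the simple diagonal case your chosen $\mathcal{C}$ --- pairs arising from one involution class in one coordinate --- need not be $G$-invariant: $G_\alpha$ involves $\Out(T)\times S_m$, which fuses conjugacy classes and permutes coordinates, so the actual $G$-orbital is a union of such sets whose size picks up factors dividing $|\Out(T)|\cdot m!$ that are not coprime to $v-1=|T|^{m-1}-1$; the clean conclusion $(v-1)\mid k(k-1)$ therefore does not follow as stated. In the twisted wreath case the comparison $m!\ge|G_\alpha|\ge c\sqrt{v}=c|T|^{m/2}$ does not close the case by itself (e.g.\ $|T|=60$, $m=21$ passes it), and ``lower bounds for the index of a transitive subgroup of $S_m$'' is too vague to see the finish. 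The paper's route for both types is quite different and worth internalizing: the socle factor $T_1$ is semiregular, so unless $T\cong Sz(q)$ it contains a fixed-point-free element of order $3$, forcing $k\mid v$ (Lemma \ref{lem2.6}); with $4\mid v$ this makes $G$ flag-transitive (Lemma \ref{lem2.7}), hence $2$-homogeneous by Theorem \ref{th1.1}, contradicting the type; the residual $Sz(q)$ case is eliminated by feeding a subdegree $d\le m|T|$ into Lemma \ref{lem2.9}(ii), which is far stronger than the order comparison you use. As it stands, the proposal is a plausible plan with the key closing arguments missing.
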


\section{Notation and preliminaries}
\label{sec2}
In this section, we give some definitions and results which will be used in our analyses.

We follow \cite{Be99} for fundamental notations on designs.
Let $\mathcal{D}=(\mathcal{P},\mathcal{B})$ be a $t$-$(v,k,\lambda)$ design with $t\geq2$.
The parameter $\lambda_2$ denotes the total number of blocks incident with a given pair of distinct points.
For every point $\alpha\in\mathcal{P}$ , the {\it star} of $\alpha$ is the set $st(\alpha)=\{B\in \mathcal{B}\mid\alpha\in B\}$.
Let $G$ be a subgroup of $\Aut(\mathcal{D})$.
The {\it socle} $\Soc(G)$ of $G$ is the product of all minimal normal subgroups of $G$.
For $H\leq G$, let $\Fix_{\mathcal{P}}(H)$ denote the set of fixed points of $H$ in $\mathcal{P}$.
We call the automorphism group $G$ {\it semi-regular} if the identity is the only element that fixes a point of $\mathcal{P}$.
If additionally $G$ is transitive, then we call $G$ {\it regular}.
For $\{\alpha_1,\ldots,\alpha_m\}\subseteq\mathcal{P}$ and $B\in\mathcal{B}$, let $G_{\alpha_1\ldots\alpha_m}$ be the pointwise stabilizer of $\{\alpha_1,\ldots,\alpha_m\}$ in $G$ and let $G_B$ be the setwise stabilizer of $B$ in $G$.
For a finite transitive permutation group, the length of an orbit of the point stabilizer is said to be {\it subdegree}.

Let $m$ and $n$ be integers and $p$ a prime.
Then $(m,n)$ is the greatest common divisor of $m$ and $n$.
We write $m\mid n$ if $m$ divides $n$ and $p^m\parallel n$ if $p^m$ divides $n$ but $p^{m+1}$ does not divide $n$.
For any $x\in\mathbb{R}$, let $\lfloor x\rfloor$ denote the greatest positive integer which is at most $x$.
All other notations are standard.

We first collect some useful results on Steiner $3$-designs and permutation groups.

\begin{lemma}{\rm\cite{Be99}}  \label{lem2.2}
If $\mathcal{D}=(\mathcal{P},\mathcal{B})$ is a $t$-$(v,k,\lambda)$ design, then the number of blocks containing any given point of $\mathcal{D}$ is a constant $r$ and the following hold:
\begin{enumerate}[{\rm(i)}]

\item  $vr=bk$;

\item  $\binom{v}{t}\lambda=b\binom{k}{t}$.

\end{enumerate}
\end{lemma}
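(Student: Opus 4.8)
The plan is to deduce all three assertions from a single stronger fact: for every integer $s$ with $0\leq s\leq t$ and every $s$-subset $S\subseteq\mathcal{P}$, the number $\lambda_s$ of blocks incident with all points of $S$ depends only on $s$, not on the choice of $S$. Specializing this to $s=1$ yields the constancy of $r=\lambda_1$ (the first claim), and specializing to $s=0$ yields assertion (ii) with $b=\lambda_0$. Assertion (i) will then be obtained separately by a direct flag count.

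The key step is a double count. Fix an $s$-subset $S$ and count the pairs $(T,B)$ in which $T$ is a $t$-subset of $\mathcal{P}$ with $S\subseteq T$ and $B\in\mathcal{B}$ is a block with $T\subseteq B$. Counting first over the choice of $T$: there are $\binom{v-s}{t-s}$ such $t$-subsets, and by the defining property of a $t$-$(v,k,\lambda)$ design each $T$ lies in exactly $\lambda$ blocks, so the total is $\lambda\binom{v-s}{t-s}$. Counting instead over the choice of $B$: a block contributes precisely when $S\subseteq B$, and then the admissible $T$ are obtained by adjoining $t-s$ of the remaining $k-s$ points of $B$, giving $\binom{k-s}{t-s}$ choices; hence the total is $\lambda_s\binom{k-s}{t-s}$. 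Equating the two counts gives
\begin{equation*}
\lambda_s\binom{k-s}{t-s}=\lambda\binom{v-s}{t-s},
\end{equation*}
and since $s\leq t<k$ forces $\binom{k-s}{t-s}\neq 0$, this determines $\lambda_s$ uniquely in terms of $s,v,k,t,\lambda$ alone. In particular $\lambda_s$ is independent of $S$, which is exactly the required constancy.

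Reading off the two special cases then completes the constancy of $r$ and assertion (ii): with $s=1$ we obtain that $r=\lambda_1$ is a well-defined constant, and with $s=0$ we obtain $b=\lambda_0=\lambda\binom{v}{t}/\binom{k}{t}$, equivalently $\binom{v}{t}\lambda=b\binom{k}{t}$. For assertion (i) I would count flags, i.e. incident point-block pairs $(\alpha,B)$, in two ways: summing over points gives $vr$, since each of the $v$ points lies in $r$ blocks (now known to be a constant), while summing over blocks gives $bk$, since each of the $b$ blocks contains $k$ points; hence $vr=bk$.

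Since the argument is entirely elementary, there is no serious obstacle. The only point requiring a moment's care is the constancy claim itself, namely confirming that the right-hand side of the displayed identity genuinely does not depend on the chosen subset $S$—which is immediate once the count is organized as above—and checking that $\binom{k-s}{t-s}\neq 0$ so that $\lambda_s$ is well-defined, a fact guaranteed by the hypothesis $t<k$.
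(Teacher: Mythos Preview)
Your proof is correct and is precisely the standard double-counting argument one finds in the cited reference \cite{Be99}; the paper itself does not supply a proof of this lemma but simply quotes it from that source, so there is nothing further to compare.
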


\begin{lemma}{\rm\cite{Be99}} \label{lem2.3}
Let $\mathcal{D}=(\mathcal{P},\mathcal{B})$ be a $t$-$(v,k,\lambda)$ design.
Then, for any integer $s$ satisfying $1\leq s\leq t$, $\mathcal{D}=(\mathcal{P},\mathcal{B})$ is also an $s$-$(v,k,\lambda_s)$ design, where
\[\lambda_s=\lambda\frac{(v-s)(v-s-1)\cdots(v-t+1)}{(k-s)(k-s-1)\cdots(k-t+1)}.\]
In particular, if $\mathcal{D}$ is a Steiner $3$-design, then $(k-2)\lambda_2=v-2$.
\end{lemma}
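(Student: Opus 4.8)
The plan is to establish the $s$-design property by a double-counting argument and then read off the closed form for $\lambda_s$, specializing at the end. Fix an arbitrary set $S=\{\alpha_1,\dots,\alpha_s\}$ of $s$ distinct points and let $\lambda_s(S)$ denote the number of blocks incident with all points of $S$; the whole point is to show that $\lambda_s(S)$ is independent of $S$ and equals the displayed quantity. First I would count in two ways the collection of pairs $(T,B)$ in which $T$ is a $t$-subset of $\mathcal{P}$ with $S\subseteq T$ and $B$ is a block with $T\subseteq B$.

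Counting the pairs by listing $T$ first: each $t$-subset $T$ lies in exactly $\lambda$ blocks by the $t$-design hypothesis, and the number of $t$-subsets $T$ with $S\subseteq T\subseteq\mathcal{P}$ is $\binom{v-s}{t-s}$, obtained by choosing the remaining $t-s$ points from the $v-s$ points outside $S$. Hence the number of pairs is $\lambda\binom{v-s}{t-s}$. Counting instead by listing $B$ first: a block $B$ contributes exactly when $S\subseteq B$, and for such a block the admissible sets $T$ are the $t$-subsets with $S\subseteq T\subseteq B$, of which there are $\binom{k-s}{t-s}$ since $|B|=k$. Summing over the blocks through $S$ gives $\lambda_s(S)\binom{k-s}{t-s}$. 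Equating the two counts yields
\[
\lambda_s(S)\binom{k-s}{t-s}=\lambda\binom{v-s}{t-s},
\]
so $\lambda_s(S)=\lambda\binom{v-s}{t-s}\big/\binom{k-s}{t-s}$. The right-hand side does not depend on the chosen $s$-set $S$, and this independence is precisely the statement that $\mathcal{D}$ is an $s$-$(v,k,\lambda_s)$ design, with the common value denoted $\lambda_s$.

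It then remains to rewrite the binomial ratio in the stated product form and to specialize. Expanding the two binomial coefficients and cancelling the common factor $(t-s)!$ gives
\[
\lambda_s=\lambda\,\frac{(v-s)!\,(k-t)!}{(v-t)!\,(k-s)!}=\lambda\,\frac{(v-s)(v-s-1)\cdots(v-t+1)}{(k-s)(k-s-1)\cdots(k-t+1)},
\]
where numerator and denominator each contain $t-s$ factors. For the final assertion, a Steiner $3$-design has $t=3$ and $\lambda=1$, so taking $s=2$ leaves a single factor in each of numerator and denominator and yields $\lambda_2=(v-2)/(k-2)$, that is $(k-2)\lambda_2=v-2$. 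I expect no genuine obstacle here; the only step demanding care is the bookkeeping in the double count, namely arranging it so that the unknown $\lambda_s(S)$ is multiplied by the quantity $\binom{k-s}{t-s}$, which is the same for every block through $S$ and is therefore what forces the count, and with it the $s$-design property, to be independent of $S$.
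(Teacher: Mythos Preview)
Your proof is correct and is the standard double-counting argument for this classical fact. The paper does not supply its own proof of this lemma; it simply cites the textbook \cite{Be99}, where precisely this argument appears, so there is nothing further to compare.
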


The following lemma gives an upper bound for the block size $k$ of a Steiner $3$-design.
\begin{lemma}{\rm{(\cite[Corollary 5]{Huber05})}}\label{lem2.4}
Let $\mathcal{D}=(\mathcal{P},\mathcal{B})$ be a nontrivial Steiner $3$-design.
Then the block size $k$ can be estimated by
\[k\leq\left\lfloor\sqrt{v}+\frac{3}{2}\right\rfloor. \]
\end{lemma}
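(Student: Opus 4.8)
The plan is to reduce the bound to the classical Fisher inequality by passing to a derived design. Fix any point $\alpha\in\mathcal{P}$ and form the derived structure $\mathcal{D}_\alpha$ whose point set is $\mathcal{P}\setminus\{\alpha\}$ and whose blocks are the sets $B\setminus\{\alpha\}$ for those blocks $B$ with $\alpha\in B$. A standard computation (or Lemma \ref{lem2.3} read off on the derived design) shows that $\mathcal{D}_\alpha$ is a $2$-$(v-1,k-1,1)$ design, i.e. a Steiner $2$-design; its nontriviality follows from $3<k<v-3$, which forces $3\leq k-1<v-1$.

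First I would record the two facts I need about $\mathcal{D}_\alpha$. Its replication number equals $\lambda_2=\frac{v-2}{k-2}$, since the derived blocks through a point $\beta\neq\alpha$ are exactly the original blocks through the pair $\{\alpha,\beta\}$; this is the identity $(k-2)\lambda_2=v-2$ of Lemma \ref{lem2.3}. Its block size is of course $k-1$. Fisher's inequality for a nontrivial $2$-design states that the number of blocks is at least the number of points; combined with the relation $vr=bk$ of Lemma \ref{lem2.2}(i), this is equivalent to the assertion that the replication number is at least the block size. Applying this to $\mathcal{D}_\alpha$ yields $\frac{v-2}{k-2}\geq k-1$, that is,
\[ (k-1)(k-2)\leq v-2. \]

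The remainder is elementary algebra. Expanding gives $v\geq k^2-3k+4$, and since $k^2-3k+4>k^2-3k+\tfrac94=\left(k-\tfrac32\right)^2$ with $k-\tfrac32>0$ (because $k>t=3$), we obtain $\sqrt{v}>k-\tfrac32$, i.e. $k<\sqrt{v}+\tfrac32$. As $k$ is an integer, this immediately gives $k\leq\left\lfloor\sqrt{v}+\frac32\right\rfloor$, as claimed. I expect no serious obstacle: the only substantive inputs are Fisher's inequality and the fact that the derived design of a Steiner $3$-design is a Steiner $2$-design, both classical, while the passage to the floor is handled at once by the strict inequality $k<\sqrt{v}+\tfrac32$ together with the integrality of $k$. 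It is worth noting that the bound is sharp: for the Steiner quadruple system $S(3,4,8)$ one has equality $(k-1)(k-2)=v-2$ (the derived design being the Fano plane), and $\left\lfloor\sqrt{8}+\tfrac32\right\rfloor=4=k$.
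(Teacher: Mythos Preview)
Your argument is correct. Note, however, that the paper does not give its own proof of this lemma: it simply quotes the bound from \cite[Corollary~5]{Huber05}. Your proof---pass to the derived design at a point to obtain a nontrivial $2$-$(v-1,k-1,1)$ design, apply Fisher's inequality in the form $r'\geq k'$ to get $(k-1)(k-2)\leq v-2$, and then complete the square---is exactly the classical derivation of this estimate and is essentially how Huber obtains it as well.
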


Considering the fact that $(q+1)q(q-1)$ is divisible by both $3$ and $4$ for any prime power $q>2$, we obtain the following lemma.

\begin{lemma}  \label{lem2.5}
Let $T\ncong Sz(q)$ be a nonabelian simple group, where $Sz(q)$ is the Suzuki group.
Then $|T|$ is divisible by $4$ and $T$ contains an element with order $3$.
\end{lemma}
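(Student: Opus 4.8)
The plan is to invoke the classification of finite simple groups and prove the two assertions separately; by Cauchy's theorem, ``$T$ contains an element of order $3$'' is equivalent to $3\mid|T|$, so it suffices to establish $4\mid|T|$ and $3\mid|T|$. The divisibility $4\mid|T|$ in fact holds for every nonabelian simple group and requires no case analysis. By the Feit--Thompson theorem $|T|$ is even, so a Sylow $2$-subgroup $P$ of $T$ is nontrivial. If $P$ were cyclic, then by Burnside's normal $2$-complement theorem $T$ would have a normal subgroup $N$ of odd order with $[T:N]=|P|$; here $N\neq T$ (since $|P|>1$) and $N\neq 1$ (else $T$ would be a cyclic $2$-group, hence abelian), so $N$ would be a proper nontrivial normal subgroup of $T$, contradicting simplicity. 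Hence $P$ is noncyclic, so $|P|\geq 4$ and $4\mid|T|$. (Alternatively, $4\mid|T|$ may be read off directly from the list of simple groups.)

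For $3\mid|T|$ I would run through the families of the classification. If $T\cong A_n$ with $n\geq 5$, then $T$ contains the $3$-cycle $(1\,2\,3)$. If $T$ is one of the $26$ sporadic groups, then $3\mid|T|$ by inspection of the known orders. The remaining, and principal, case --- the one at which the observation preceding the lemma is aimed --- is when $T$ is a simple group of Lie type over a field $\mathbb{F}_q$. If the defining characteristic is $3$ (for instance for the Ree groups ${}^2G_2(q)$), then $3\mid q\mid|T|$. Otherwise I would use the generic order formulas: every such $T$ distinct from $Sz(q)={}^2B_2(q)$ has in its order a factor such as $q^2-1$ --- already visible in $|\mathrm{PSL}_2(q)|$ and arising from an $A_1$-type subsystem --- and $q^2-1=(q-1)(q+1)$ is divisible by $3$ because among the three consecutive integers $q-1,q,q+1$ one is a multiple of $3$; a short check shows that dividing out the $\gcd$-factors that occur in the orders of the simple quotients does not destroy this $3$-divisibility. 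For ${}^2F_4(q)$ with $q=2^{2m+1}$ one uses $q^2\equiv 1\pmod 3$ together with the factor $q^4-1$, and the Tits group ${}^2F_4(2)'$ is checked by hand. This leaves precisely $T\cong Sz(q)$, for which $|Sz(q)|=q^2(q-1)(q^2+1)$ with $q=2^{2m+1}\equiv 2\pmod 3$ is genuinely not divisible by $3$; but this family is excluded by hypothesis.

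The only substantive work lies in the Lie-type case, and the main obstacle is bookkeeping rather than any real difficulty: one must pass through the order polynomials of the classical groups $\mathrm{PSL}_n(q)$, $\mathrm{PSp}_{2n}(q)$, $\mathrm{P\Omega}^{\pm}(q)$, the exceptional types, and the twisted types ${}^2A_n(q)$, ${}^2D_n(q)$, ${}^3D_4(q)$, ${}^2E_6(q)$, ${}^2B_2(q)$, ${}^2G_2(q)$, ${}^2F_4(q)$, verifying in each that a $3$-divisible factor survives after dividing out the centre, and isolating ${}^2B_2(q)$ as the unique exception. Equivalently, one may simply invoke the classical consequence of the classification that the Suzuki groups are exactly the nonabelian simple groups of order coprime to $3$. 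The lemma then follows, the element of order $3$ being supplied by Cauchy's theorem.
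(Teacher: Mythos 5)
Your proof is correct and takes essentially the same route as the paper, which gives no real argument beyond appealing to the standard consequence of the classification that the Suzuki groups are the only nonabelian simple groups whose order is coprime to $3$, together with the fact that every nonabelian simple group has order divisible by $4$. Your classification-free derivation of $4\mid|T|$ via Feit--Thompson and Burnside's normal $p$-complement theorem is a nice refinement, but otherwise the content matches.
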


\begin{lemma}  \label{lem2.6}
Let $\mathcal{D}=(\mathcal{P},\mathcal{B})$ be a Steiner $3$-design admitting an automorphism group $G$.
If there exists an element $s$ of $G$ with order $3$ and $s$ fixes no points, then $k$ divides $v$.
\end{lemma}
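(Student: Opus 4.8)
The plan is to run a double-counting argument relating the $\langle s\rangle$-orbits on points to the blocks that $s$ fixes setwise.

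First I would record the basic structure of the action of $s$. Since $s$ has order $3$ and no fixed points on $\mathcal{P}$, every $\langle s\rangle$-orbit on $\mathcal{P}$ has size exactly $3$; in particular $3\mid v$ and there are precisely $v/3$ such orbits. Similarly, if a block $B\in\mathcal{B}$ satisfies $B^s=B$, then $s$ permutes the $k$ points of $B$ with all orbits of size $3$, so $3\mid k$ and $B$ is the disjoint union of $k/3$ point-orbits. Write $\mathcal{B}_0$ for the set of blocks fixed setwise by $s$.

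The key observation is that each point-orbit lies in a unique block, and that block belongs to $\mathcal{B}_0$. Indeed, given a point $\alpha$, put $O=\{\alpha,\alpha^s,\alpha^{s^2}\}$; since $\mathcal{D}$ is a Steiner $3$-design there is a unique block $B_O$ incident with these three points. Applying the automorphism $s$ gives $B_O^{\,s}\supseteq O^s=O$, so $B_O^{\,s}$ is also a block through the three points of $O$, whence $B_O^{\,s}=B_O$ by uniqueness. Thus $O\mapsto B_O$ is a well-defined map from the set of point-orbits into $\mathcal{B}_0$, and for $B\in\mathcal{B}_0$ one has $B_O=B$ if and only if $O\subseteq B$.

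Finally I would count pairs $(O,B)$ with $O$ a point-orbit, $B\in\mathcal{B}_0$, and $O\subseteq B$. Summing over $O$, each of the $v/3$ point-orbits contributes exactly once (via its unique block $B_O$). Summing over $B$, each $B\in\mathcal{B}_0$ contains exactly $k/3$ point-orbits. Hence $v/3=|\mathcal{B}_0|\cdot(k/3)$, that is $v=|\mathcal{B}_0|\,k$, so $k\mid v$. I do not expect a genuine obstacle here; the only points needing care are the verification that the block through a point-orbit is truly $s$-invariant — this is exactly where $\lambda=1$ is used — and that an $s$-fixed block decomposes into orbits of size $3$, which rests on $s$ being fixed-point-free.
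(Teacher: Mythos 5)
Your proof is correct and is essentially the paper's argument: the paper also counts the $3$-cycles (orbits) of $s$, notes each determines a unique block which must be $s$-fixed, and concludes the fixed blocks partition the point set, giving $v/k$ fixed blocks. Your version just spells out the details (the $s$-invariance of $B_O$ via $\lambda=1$, and the decomposition of a fixed block into $k/3$ orbits) that the paper leaves implicit.
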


\begin{proof}
By counting the number of $3$-cycles of $s$, the number of the blocks fixed by $s$ equals $v/k$.
This implies that the set of points is the union of disjoint blocks and thus $k$ divides $ v$.
\end{proof}

\begin{lemma}{\rm\cite{Camina84}}  \label{lem2.8}
Let $X$ be a permutation group on a set $\Delta$.
Assume that $p^e$ is the highest power of some prime $p$ which divides $|\Delta|$ and assume that $p^e$ divides $[X:X_{\alpha\beta}]$ for all $\alpha,\beta\in\Delta$, $\alpha\neq\beta$.
Then each orbit of $X$ on $\Delta$ has size divisible by $p^e$.
\end{lemma}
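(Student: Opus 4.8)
The plan is to translate the hypothesis into a statement about the action of $X$ on \emph{ordered pairs}, and then to play three divisibility facts off against one another: the within-orbit pair count, the cross-orbit pair count, and the global constraint $p^e\mid|\Delta|$. First I would record that for distinct $\alpha,\beta$ the index $[X:X_{\alpha\beta}]$ is exactly the length of the $X$-orbit of the ordered pair $(\alpha,\beta)$, since $X_{\alpha\beta}$ is precisely the stabilizer of that ordered pair. Thus the hypothesis says exactly that every orbit of $X$ on the set of ordered pairs of distinct points has length divisible by $p^e$. It therefore suffices to show that each orbit $\mathcal{O}$ of $X$ on $\Delta$ has $|\mathcal{O}|$ divisible by $p^e$.

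Fix an orbit $\mathcal{O}$ with $|\mathcal{O}|=\ell$. The set of ordered pairs of distinct points lying in $\mathcal{O}$ is $X$-invariant, hence a disjoint union of $X$-orbits on ordered pairs, each of size divisible by $p^e$; since this set has size $\ell(\ell-1)$, we obtain $p^e\mid\ell(\ell-1)$. Because $\ell$ and $\ell-1$ are coprime and $p^e$ is a prime power, $p^e$ divides one of the two factors, so $\ell\equiv0$ or $\ell\equiv1\pmod{p^e}$. The same invariance argument applied across two \emph{distinct} orbits $\mathcal{O}_i,\mathcal{O}_j$ shows that $\mathcal{O}_i\times\mathcal{O}_j$, a union of ordered pairs of distinct points, has size divisible by $p^e$, giving $p^e\mid\ell_i\ell_j$.

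The remaining task, and the crux, is to eliminate orbits with $\ell\equiv1$. If two orbits both had length $\equiv1\pmod{p^e}$, then their product would be $\equiv1\pmod{p^e}$, contradicting $p^e\mid\ell_i\ell_j$; hence at most one orbit has length $\equiv1$. Summing over all orbits and using $p^e\mid|\Delta|$ (which holds since $p^e\,\|\,|\Delta|$) then yields $0\equiv|\Delta|=\sum_i\ell_i\equiv(\text{number of orbits with }\ell\equiv1)\pmod{p^e}$; as that number is at most one, it must be $0$, so every orbit has length divisible by $p^e$. The main obstacle is exactly this last point: the within-orbit count alone only gives $p^e\mid\ell(\ell-1)$ and cannot separate the genuine case $\ell\equiv0$ from the spurious case $\ell\equiv1$. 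Resolving it requires both the cross-orbit product divisibility and the global divisibility $p^e\mid|\Delta|$, which is precisely where the full force of the hypothesis (over \emph{all} pairs) and the maximality of $p^e$ enter.
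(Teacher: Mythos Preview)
The paper does not supply its own proof of this lemma: it is quoted verbatim from Camina--Gagen \cite{Camina84} with no argument, so there is nothing in the paper to compare against.

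Your argument is correct and complete. The key identifications are all sound: $X_{\alpha\beta}$ is exactly the stabilizer of the ordered pair $(\alpha,\beta)$, so the hypothesis says every $X$-orbit on ordered pairs of distinct points has length divisible by $p^e$; the within-orbit pair set $\{(\alpha,\beta):\alpha\ne\beta,\ \alpha,\beta\in\mathcal{O}\}$ and the cross-orbit product $\mathcal{O}_i\times\mathcal{O}_j$ (for $i\ne j$) are both $X$-invariant unions of such orbits, yielding $p^e\mid\ell(\ell-1)$ and $p^e\mid\ell_i\ell_j$; and the final counting step using $p^e\mid|\Delta|$ correctly rules out a lone orbit of length $\equiv 1\pmod{p^e}$. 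One cosmetic point: you might remark that the case $p^e=1$ is vacuous, so that ``at most one and $\equiv 0\pmod{p^e}$ forces zero'' is justified. This is in fact essentially the same double-counting argument Camina and Gagen give, so your route is the standard one rather than a genuinely different approach.
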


The next somewhat technical lemma is crucial to the proof of Theorem \ref{th1}.
\begin{lemma}  \label{lem2.7}
Let $\mathcal{D}=(\mathcal{P},\mathcal{B})$ be a nontrivial Steiner $3$-design admitting a block-transitive automorphism group $G$.
Let $B$ be a block.
If $k\mid v$ and $4\mid v$, then $G$ is flag-transitive.
\end{lemma}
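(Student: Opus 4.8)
The plan is to show that the block stabiliser $G_B$ acts transitively on the $k$ points of $B$; since $G$ is block-transitive it is point-transitive by the Block Lemma, and transitivity of $G_B$ on $B$ then upgrades block-transitivity to flag-transitivity in the usual way (compose a block-moving element with a point-moving element of the target block stabiliser). As preparation I would record the parameter identities $vr=bk$ (Lemma~\ref{lem2.2}(i)) and, taking $s=1$ in Lemma~\ref{lem2.3}, the fact that the replication number satisfies $r(k-1)(k-2)=(v-1)(v-2)$, so in particular $r$ is a positive integer.

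The key point is that $\gcd(k,r)=1$, and this is where the hypotheses $k\mid v$ and $4\mid v$ enter. I would argue prime by prime. Let $p$ be a prime dividing $k$. From $k\mid v$ we get $p\mid v$, hence $p\nmid v-1$ and $p\nmid k-1$. If $p$ is odd then also $p\nmid v-2$ and $p\nmid k-2$, so comparing $p$-parts on the two sides of $r(k-1)(k-2)=(v-1)(v-2)$ forces $p\nmid r$. If $p=2$ then $k$ is even and $4\mid v$: here $v-1$ and $k-1$ are odd, $v-2=2(v/2-1)$ with $v/2$ even so $2\parallel v-2$, while $k-2=2(k/2-1)$ is even; reading $r(k-1)(k-2)=(v-1)(v-2)$ through $2$-adic valuations and using that $r$ is an integer forces $2\parallel k-2$ and $2\nmid r$. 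Either way $p\nmid r$, so $\gcd(k,r)=1$.

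To finish I would run an orbit count on a single flag. Let $\Delta$ be a $G_B$-orbit on the point set of $B$, fix $\alpha\in\Delta$, and consider the flag $(\alpha,B)$, whose $G$-stabiliser is $G_\alpha\cap G_B$. Evaluating $[G:G_\alpha\cap G_B]$ through $G_B$ gives $b\,|\Delta|$, and through $G_\alpha$ gives $v\,|B^{G_\alpha}|$, so $b\,|\Delta|=v\,|B^{G_\alpha}|$; together with $bk=vr$ this yields $r\,|\Delta|=k\,|B^{G_\alpha}|$. Since $\gcd(k,r)=1$ we get $k\mid|\Delta|$, and as $1\le|\Delta|\le k$ this forces $|\Delta|=k$. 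Hence $G_B$ is transitive on $B$ and $G$ is flag-transitive.

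I expect the only delicate step to be the prime $2$ in the computation of $\gcd(k,r)$: one genuinely needs $4\mid v$ rather than just $2\mid v$, together with the integrality of $r$, to conclude $2\parallel k-2$ and hence that $r$ is odd; for odd primes dividing $k$ the vanishing of the $p$-part of $r$ is immediate from $p\mid v$. (Alternatively, the last paragraph could be replaced by applying Lemma~\ref{lem2.8} to $G_B$ acting on $B$, whose divisibility hypothesis is exactly guaranteed by $\gcd(k,r)=1$, but the single-flag count above seems more transparent.)
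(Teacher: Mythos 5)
Your proof is correct, and it reaches the conclusion by a route that differs from the paper's in its group-theoretic half. The arithmetic core is the same in both arguments: from $k\mid v$ one gets that every odd prime divisor of $k$ is prime to $(v-1)(v-2)$, and from $4\mid v$ one gets $2\parallel(v-2)$, which is exactly what kills the prime $2$; your formulation of this as $(k,r)=1$ via $r(k-1)(k-2)=(v-1)(v-2)$ is a clean packaging of what the paper does with the quantity $\frac{(v-1)(v-2)}{k-2}$. Where you diverge is afterwards: the paper counts the two-point flag stabilizers $G_{\alpha\beta B}$, deduces $p^e\mid[G_B:G_{\alpha\beta B}]$ for every prime power $p^e\parallel k$ and every pair $\alpha,\beta\in B$, and then invokes the Camina--Gagen orbit lemma (Lemma \ref{lem2.8}) to force every $G_B$-orbit on $B$ to have length divisible by $k$. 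You instead run the single-flag count $r|\Delta|=k|B^{G_\alpha}|$ for a $G_B$-orbit $\Delta\ni\alpha$ in $B$, and conclude $k\mid|\Delta|$ directly from $(k,r)=1$. Your version is more elementary (it dispenses with Lemma \ref{lem2.8} entirely) and slightly shorter; the paper's two-point count is the one that generalizes to the divisibility statements it needs elsewhere (Lemma \ref{lem2.9}), which is presumably why it is phrased that way. All the individual steps you flag as delicate check out: $k\geq4$ ensures $k-2$ is even when $k$ is, the $2$-adic valuation comparison does force $2\parallel(k-2)$ and $2\nmid r$, and $1\leq|\Delta|\leq k$ closes the argument.
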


\begin{proof}
For any $\alpha$, $\beta\in B$, as $G$ acts block-transitively and point-transitively on $\mathcal{D}$, we get
\[[G:G_{\alpha\beta B}]=[G:G_{\alpha}][G_{\alpha}:G_{\alpha\beta B}]=v[G_{\alpha}:G_{\alpha\beta B}]\]
and
\[[G:G_{\alpha\beta B}]=[G:G_B][G_B:G_{\alpha\beta B}]=b[G_B:G_{\alpha\beta B}].\]
Combining these two equations with Lemma \ref{lem2.2}, we obtain
\[k(k-1)\Bigm| \frac{(v-1)(v-2)[G_{B}:G_{\alpha\beta B}]}{k-2}.\]
When $k$ is odd, $k\mid v$ implies $\left(k,(v-1)(v-2)\right)=1$ and thus $k\mid[G_{B}:G_{\alpha\beta B}]$.
When $k$ is even, notice that $2\mid(k-2)$ and $2\parallel (v-2)$.
It follows that $\left(k,\frac{(v-1)(v-2)}{(2,k-2)}\right)=1$.
Hence, $k\mid[G_{B}:G_{\alpha\beta B}]$.

However, $G_B$ acts on the $k$ points of the block $B$.
Let $p$ be a prime dividing $k$ and let $p^e\|k$.
Then we have $p^e\mid[G_{B}:G_{\alpha\beta B}]$, for any $\alpha$, $\beta\in B$.
Therefore, by Lemma \ref{lem2.8}, $p^e$ divides the size of the orbits of $G_B$ on $B$.
But this is true for each prime divisor of $k$ and so $G_B$ is transitive on the points of $B$.
Thus $G$ is flag-transitive as claimed.
\end{proof}

\begin{lemma}  \label{lem2.9}
Let $\mathcal{D}=(\mathcal{P},\mathcal{B})$ be a Steiner $3$-design admitting a block-transitive automorphism group $G$.
Then the following hold:
\begin{enumerate}[{\rm(i)}]

\item  $(v-1)(v-2)$ divides $k(k-1)(k-2)|G_\alpha|$ for any $\alpha\in\mathcal{P}$;

\item  $(v-1)(v-2)$ divides $k(k-1)(k-2)d(d-1)$ for all nontrivial subdegrees $d$ of $G$.

\end{enumerate}
\end{lemma}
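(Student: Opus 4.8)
The plan is to convert block-transitivity into a divisibility statement via the orbit--stabilizer theorem, using that for a Steiner $3$-design Lemma \ref{lem2.2} (with $\lambda=1$, $t=3$) gives $b=\frac{v(v-1)(v-2)}{k(k-1)(k-2)}$. For both parts I would exhibit a $G$-set whose size is forced to be a multiple of $b$, and then read off the conclusion by substituting this expression for $b$ and cancelling the common factor $v$.

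For (i): by the Block Lemma $G$ is point-transitive, so $|G|=v|G_\alpha|$; since $G$ is transitive on $\mathcal{B}$, the orbit--stabilizer theorem gives $b\mid|G|=v|G_\alpha|$. Plugging in the formula for $b$ and cancelling $v$ yields $(v-1)(v-2)\mid k(k-1)(k-2)|G_\alpha|$.

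For (ii): fix $\alpha\in\mathcal{P}$ and a nontrivial $G_\alpha$-orbit $\Delta$ of length $d\geq2$, together with a point $\beta\in\Delta$, so that $\Delta=\beta^{G_\alpha}\subseteq\mathcal{P}\setminus\{\alpha\}$. The object to work with is
\[\mathcal{V}=\bigl\{\,(\alpha',\beta',\gamma')\ :\ (\alpha',\beta')\in(\alpha,\beta)^{G},\ \gamma'\in(\beta')^{G_{\alpha'}}\setminus\{\beta'\}\,\bigr\}.\]
One checks that $\mathcal{V}$ is $G$-invariant, the key identity being $(\beta'g)^{G_{\alpha'g}}=\bigl((\beta')^{G_{\alpha'}}\bigr)g$ for $g\in G$, which shows that both defining conditions are preserved. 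Each element of $\mathcal{V}$ is a triple of three distinct points (since $(\beta')^{G_{\alpha'}}$ misses $\alpha'$ and $\beta'\ne\alpha'$), and since $|(\alpha,\beta)^{G}|=vd$ while each pair $(\alpha',\beta')$ admits exactly $|(\beta')^{G_{\alpha'}}|-1=d-1$ choices of $\gamma'$, we get $|\mathcal{V}|=vd(d-1)$. Now I would count incidences between the triples in $\mathcal{V}$ and the blocks: because $\lambda=1$ every triple lies in a unique block, so this count equals $|\mathcal{V}|$; on the other hand $\mathcal{V}$ is $G$-invariant and $G$ is block-transitive, so every block contains the same number $m$ of triples of $\mathcal{V}$, giving $|\mathcal{V}|=bm$. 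Hence $b\mid vd(d-1)$, and substituting the formula for $b$ and cancelling $v$ gives $(v-1)(v-2)\mid k(k-1)(k-2)d(d-1)$.

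The routine arithmetic aside, the one point that needs care is the construction in (ii): one must verify that the $G$-action genuinely permutes the suborbits of the point stabilizers in the way encoded by $\mathcal{V}$ (this is exactly the identity above), and that the count $|\mathcal{V}|=vd(d-1)$ is done correctly. No part of the argument looks like a serious obstacle; it is essentially a repackaging of the parameter relations of a Steiner $3$-design together with the orbit--stabilizer theorem.
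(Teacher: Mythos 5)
Your proof is correct. Part (i) is essentially the paper's argument: both amount to the observation that $b=[G:G_B]$ divides $|G|=v|G_\alpha|$, followed by substituting $b=\frac{v(v-1)(v-2)}{k(k-1)(k-2)}$. Part (ii) reaches the same conclusion by a genuinely different double count. The paper works locally at a fixed point $\alpha$: it decomposes the star $st(\alpha)$ into $G_\alpha$-orbits $\mathcal{O}_i$ with $|\mathcal{O}_i|=\frac{r[G_{B_i}:G_{\alpha B_i}]}{k}$, counts pairs $\{\beta,\gamma\}\subseteq\Gamma$ against the blocks of $st(\alpha)$ containing them, and deduces $r\mid kd(d-1)$. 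You instead build the $G$-invariant family $\mathcal{V}$ of ordered triples of size $vd(d-1)$, note that block-transitivity forces each block to contain the same number of members of $\mathcal{V}$, and deduce $b\mid vd(d-1)$; since $bk=vr$, your intermediate divisibility $b\mid vd(d-1)$ is literally equivalent to the paper's $r\mid kd(d-1)$, so the two proofs converge at that point. Your global version avoids the orbit-size formula for $G_\alpha$ acting on $st(\alpha)$ at the cost of verifying the $G$-invariance of $\mathcal{V}$ (which you do correctly via $(\beta^g)^{G_{\alpha^g}}=(\beta^{G_\alpha})^g$); the paper's local version is slightly more hands-on but needs no auxiliary construction. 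One cosmetic remark: when $d=1$ both sides of the conclusion involve $d(d-1)=0$ and the statement is vacuous, so nothing is lost by your implicit assumption $d\geq2$.
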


\begin{proof}
Let $B$ be a block containing the point $\alpha$.
As $G$ acts block-transitively and point-transitively on $\mathcal{D}$, we have
\[[G:G_{\alpha B}]=[G:G_{\alpha}][G_{\alpha}:G_{\alpha B}]=v[G_{\alpha}:G_{\alpha B}]\]
and
\[[G:G_{\alpha B}]=[G:G_B][G_B:G_{\alpha B}]=b[G_B:G_{\alpha B}].\]
Together with Lemma \ref{lem2.2}, the above two equations lead to
\[[G_{\alpha}:G_{\alpha B}]=\frac{b[G_{B}:G_{\alpha B}]}{v}=\frac{(v-1)(v-2)[G_{B}:G_{\alpha B}]}{k(k-1)(k-2)},\]
which implies that $(v-1)(v-2)$ divides $k(k-1)(k-2)|G_\alpha|$.

Assume that $\mathcal{O}_1,\mathcal{O}_2,\ldots,\mathcal{O}_w$ are the $w$ orbits of $G_\alpha$ on $st(\alpha)$ and $B_i$ is the representative of $\mathcal{O}_i$ for $i=1,2,\ldots,w$.
Thus by Lemma \ref{lem2.2}, we get
\[|\mathcal{O}_i|=[G_{\alpha}:G_{\alpha B_i}]=\frac{b[G_{B_i}:G_{\alpha B_i}]}{v}=\frac{r[G_{B_i}:G_{\alpha B_i}]}{k}.\]
Suppose that $\Gamma\neq\{\alpha\}$ is a nontrivial $G_{\alpha}$-orbit with $|\Gamma|=d$.
Let $\mu_i=|\Gamma\cap B_i|$. Obviously, $\mu_i$ is independent of the choice of the representative.
Counting the size of set $\{(\{\beta,\gamma\},B)\mid B\in st(\alpha),\{\beta,\gamma\}\subseteq B\cap\Gamma\}$ in two ways, we obtain
\[|\mathcal{O}_1|\binom{\mu_1}{2}+\cdots+|\mathcal{O}_w|\binom{\mu_w}{2}=\binom{d}{2}.\]
Therefore,
\[\frac{r[G_{B_1}:G_{\alpha B_1}]}{k}\binom{\mu_1}{2}+\cdots+\frac{r[G_{B_w}:G_{\alpha B_w}]}{k}\binom{\mu_w}{2}=\binom{d}{2}.\]
Hence, we get  that $r$ divides $kd(d-1)$.
By Lemma \ref{lem2.2} again, we conclude that $(v-1)(v-2)$ divides $k(k-1)(k-2)d(d-1)$.
\end{proof}

\section{Proof of Theorem \ref{th1}}
In this section, we shall always assume that $\mathcal{D}=(\mathcal{P},\mathcal{B})$ is a nontrivial Steiner $3$-design with parameters $(v,b,r,k,\lambda)$ and $G\leq\Aut(\mathcal{D})$ is block-transitive point-primitive.
The three types of the primitive group $G$ on $\mathcal{P}$ which we need to examine are the simple diagonal type, twisted wreath type and the product type.
Firstly, we prove the following proposition.

\begin{proposition}  \label{pro3.1}
Let $G$ act as a block-transitive point-primitive automorphism group of a nontrivial Steiner $3$-design $\mathcal{D}=(\mathcal{P},\mathcal{B})$.
Then $G$ is not of simple diagonal type or twisted wreath type.
\end{proposition}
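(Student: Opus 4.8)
My strategy is to rule out the simple diagonal and twisted wreath types by exploiting the tension between the divisibility constraints of Lemma~\ref{lem2.9} and the severe restriction $k \le \lfloor\sqrt{v}+3/2\rfloor$ from Lemma~\ref{lem2.4}. In both cases the socle is $T^\ell$ for a nonabelian simple group $T$ and some $\ell \ge 2$, with $v = |T|^{\ell-1}$ (diagonal type) or $v = |T|^\ell$ with $\ell \ge 6$ (twisted wreath type). The key numerical fact is that $v$ is then a proper power (or at least highly composite) of $|T|$, while $k$ is of size roughly $\sqrt v$; so $k$ is small relative to $v$, and the divisibility $(v-1)(v-2) \mid k(k-1)(k-2)|G_\alpha|$ forces $|G_\alpha|$ to absorb a large factor.

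First I would treat the diagonal type. Here $v = |T|^{\ell-1}$ and $|G_\alpha|$ divides $|T|^{\ell-1}\cdot |\Out(T)| \cdot \ell\,|T|$ — more precisely a point stabilizer has a normal subgroup isomorphic to a diagonal copy of $T$ and the quotient embeds in $\Out(T)\times S_\ell$. The plan is to find an element of order $3$ in $G$ (or a suitable prime-power divisor of $k$) acting without fixed points: by Lemma~\ref{lem2.5}, $T$ contains an element of order $3$ unless $T \cong Sz(q)$, and since $4 \mid |T|$ in the non-Suzuki case one hopes to push through Lemmas~\ref{lem2.6} and~\ref{lem2.7} to deduce flag-transitivity, then invoke Huber's classification \cite{Huber05} of flag-transitive Steiner $3$-designs to see that none of those examples has a diagonal-type group. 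For the Suzuki case one argues separately using the structure of $Sz(q)$. Alternatively — and this is probably cleaner — one combines Lemma~\ref{lem2.9}(i) with Lemma~\ref{lem2.4}: since $k \le \sqrt v + 2 = |T|^{(\ell-1)/2}+2$, the factor $k(k-1)(k-2)$ is at most about $v^{3/2}$, whereas $(v-1)(v-2) \approx v^2$, so $|G_\alpha|$ must be divisible by roughly $v^{1/2} = |T|^{(\ell-1)/2}$ beyond what $|T|^{\ell-1}$ already supplies; comparing with $|G_\alpha| \mid |T|^{\ell-1}|\Out(T)|\,\ell\,|T|$ and noting $|\Out(T)|,\ell$ are small relative to powers of $|T|$ gives a contradiction once $\ell$ or $|T|$ is large, leaving only a finite list to check by hand (or by a divisibility argument using $(k-2)\lambda_2 = v-2$ from Lemma~\ref{lem2.3}).

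For the twisted wreath type the situation is more favourable: $v = |T|^\ell$ with $\ell \ge 6$, and the point stabilizer $G_\alpha$ is (essentially) the complement $P$ in the wreath-product construction, which is isomorphic to a transitive subgroup of $S_\ell$ — in particular $|G_\alpha|$ divides $\ell!$, a number utterly negligible compared with $v = |T|^\ell$. Then Lemma~\ref{lem2.9}(i) reads $(|T|^\ell-1)(|T|^\ell-2) \mid k(k-1)(k-2)\cdot(\text{divisor of }\ell!)$, and with $k \le |T|^{\ell/2}+2$ the right side is far too small — this is an immediate contradiction with no case analysis needed. So the twisted wreath case is the easy one. The main obstacle will be the diagonal type: cleanly eliminating the small-$\ell$, small-$|T|$ leftovers (especially $\ell = 2$, where $v = |T|$ and $G_\alpha$ can be as large as $T.\Out(T)$, so the crude size estimate is weakest) will require either the flag-transitive reduction via Lemmas~\ref{lem2.5}--\ref{lem2.7} plus Huber's theorem, or a delicate arithmetic argument with $\lambda_2$; I would aim to route as much as possible through the flag-transitivity shortcut to avoid a long ad hoc analysis.
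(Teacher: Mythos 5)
Your plan correctly identifies the two mechanisms the paper actually uses (the flag-transitivity reduction via Lemmas~\ref{lem2.5}--\ref{lem2.7}, and the comparison of the divisibility conditions of Lemma~\ref{lem2.9} against the bound $k\leq\lfloor\sqrt{v}+3/2\rfloor$), but as written it has gaps at exactly the points you would need to be careful. The most serious is your treatment of the twisted wreath case: the point stabilizer $P$ is \emph{not} a transitive subgroup of $S_\ell$, and $|G_\alpha|$ does not divide $\ell!$. In a primitive twisted wreath product the coordinate stabilizer $Q\leq P$ must map onto a subgroup of $\Aut(T)$ containing $\Inn(T)$, so $|P|=\ell|Q|\geq\ell|T|$, and since $C_G(\Soc(G))=1$ one only gets $P\hookrightarrow\Aut(T)\wr S_\ell$; there is no useful absolute bound on $|G_\alpha|$. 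Hence your ``immediate contradiction with no case analysis'' collapses. The paper repairs exactly this by switching from Lemma~\ref{lem2.9}(i) to Lemma~\ref{lem2.9}(ii): $G_\alpha$ always has a suborbit of size at most $m|T|$ (the points differing from $\alpha$ in a single socle coordinate), and $(v-1)(v-2)\mid k(k-1)(k-2)d(d-1)$ with $d\leq m|T|$ and $k\lesssim\sqrt{v}$ gives the contradiction.

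Two further points need filling in. First, your flag-transitivity route requires the order-$3$ element to be fixed-point-free; the paper gets this from the structural fact that a single simple direct factor $T_1$ of the socle acts \emph{semi-regularly} on $\mathcal{P}$ in both the diagonal and twisted wreath types -- you only ``hope to push through,'' and without semi-regularity Lemma~\ref{lem2.6} does not apply. (Once flag-transitivity is obtained, Theorem~\ref{th1.1}(ii) already yields $2$-homogeneity and hence a contradiction with the O'Nan--Scott type; Huber's classification is not needed.) Second, you correctly flag the diagonal case with $m=2$ as the weak spot, but the case that actually survives all your estimates is $T\cong Sz(q)$ with small $m$, where $3\nmid|T|$ kills the flag-transitivity route and $v=|T|^{m-1}$ with $|G_\alpha|\leq 2|\Aut(T)|$ defeats the crude size count. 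The missing ingredient is arithmetic, not asymptotic: since $(|T|^{m-1}-1,|T|)=1$ and $(|T|^{m-1}-2,|T|)=2$, the factor $|T|$ inside $|\Aut(T)|=|T||\Out(T)|$ contributes only a factor $2$ to the divisibility of Lemma~\ref{lem2.9}(i), leaving $(v-1)(v-2)\leq 2k(k-1)(k-2)|S_m||\Out(T)|$, which together with $|\Out(Sz(q))|\leq|Sz(q)|^{1/5}$ is a genuine contradiction. Without this coprimality observation (or some substitute) your ``finite list to check by hand'' is in fact an infinite family.
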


\begin{proof}
Let $B$ be a block containing the point $\alpha$.
Assume that $G$ is of simple diagonal type or twisted wreath type with $\Soc(G)=T_1\times T_2\times \cdots\times T_m\cong T^m$, where $T_i\cong T$ is a nonabelian simple group.
Then, $T_1$ is semi-regular and $v=|T|^{m-1}$ or $|T|^m$, $m\geq2$.
We distinguish two cases according as $T\cong Sz(q)$ or not, where $Sz(q)$ is the Suzuki group.

Let $T\ncong Sz(q)$. By Lemma \ref{lem2.5}, $v$ is divisible by $4$ and there exists an element $s\in T_1$ with order $3$.
As $T_1$ is semi-regular, there is no point of $\mathcal{P}$ fixed by $s$.
Hence by Lemma \ref{lem2.6}, $k$ divides $v$.
Combining this with Lemma \ref{lem2.7}, we obtain that $G$ is flag-transitive, yielding a contradiction to Theorem \ref{th1.1}.

Let $T\cong Sz(q)$.
Then $v\geq|T|\geq29210.$
First we suppose that $G$ is a primitive group of twisted wreath type on $\mathcal{P}$.
Then $v=|T|^{m}$ and $G\cong {_Q}{}H\rtimes P$, where $P=G_{\alpha}$ is a transitive permutation group on $\{1,2,\cdots,m\}$ with $m\geq6$.
Moreover, $G_{\alpha}$ has an orbit $\Gamma$ in $\mathcal{P}\setminus\{\alpha\}$ with $|\Gamma|\leq m|T|$.
By Lemma \ref{lem2.9} (ii), we have
\[(|T|^m-1)(|T|^m-2)\leq k(k-1)(k-2)|\Gamma|(|\Gamma|-1)\leq k(k-1)(k-2)m|T|(m|T|-1),\]
which contradicts Lemma \ref{lem2.4}.

Next, let $G$ be a primitive group of simple diagonal type on $\mathcal{P}$.
Then, $v=|T|^{m-1}$.
Furthermore, $G_{\alpha}$ is isomorphic to a subgroup of $\Aut(T)\times S_m$ and has an orbit $\Gamma'$ in $\mathcal{P}\setminus\{\alpha\}$ with $|\Gamma'|\leq m|T|$, where $S_m$ is the symmetric group of degree $m$.
Similarly, Lemma \ref{lem2.9} (ii) and \ref{lem2.4} lead to
\begin{align*}
\left(|T|^{m-1}-1\right)\left(|T|^{m-1}-2\right)&\leq k\left(k-1\right)\left(k-2\right)|\Gamma'|\left(|\Gamma'|-1\right)\\
&\leq\left(|T|^{\frac{m-1}{2}}+2\right)\left(|T|^{\frac{m-1}{2}}+1\right)|T|^{\frac{m-1}{2}}m|T|\left(m|T|-1\right),
\end{align*}
and thus $m\leq5$.
By Lemma \ref{lem2.9} (i), we obtain
\[\left(|T|^{m-1}-1\right)\left(|T|^{m-1}-2\right)\Bigm| k(k-1)(k-2)|S_m||\Aut(T)|\Bigm| k(k-1)(k-2)|S_m||T||\Out(T)|,\]
where $\Out(T)$ is the outer automorphism group of $T$.
Note that $|\Out(Sz(q))|\leq |Sz(q)|^{\frac{1}{5}}$, $\left(|T|^{m-1}-1,|T|\right)=1$ and $\left(|T|^{m-1}-2,|T|\right)=2$, which imply
\[\left(|T|^{m-1}-1\right)\left(|T|^{m-1}-2\right)\leq2k(k-1)(k-2)|S_m||\Out(T)|,\]
where $m\leq5$. This contradicts Lemma \ref{lem2.4}.
\end{proof}

To complete the proof of Theorem \ref{th1}, we need to show that the automorphism group with product type does not occur.
For proving this, we give some notations and two simple lemmas.

Let $\mathcal{D}=(\mathcal{P},\mathcal{B})$ be a nontrivial Steiner $3$-design admitting a point-primitive automorphism group $G$ with product type.
Further, we assume $G\leq(\Psi_1\times\cdots\times\Psi_m)\rtimes S_m=\Psi\wr S_m$ with $m\geq2$, where $S_m$ is the symmetric group of degree $m$ and $\Psi_i\cong\Psi$ acts primitively on a set $\mathcal{Q}$ with degree $w\geq5$.
Hence, $v=w^m$ and
\[\mathcal{P}=\{(q_1,q_2,\ldots,q_m)\mid q_i\in\mathcal{Q},i=1,2,\ldots,m\}.\]
Moreover, $\Psi_i\cong\{(\theta_{1},\theta_{2},\ldots,\theta_{m})\mid\theta_{i}\in\Psi,\theta_{j}=1$ for any $j\neq i\}$ and the action of the element $(\theta_{1},\theta_{2},\ldots,\theta_{m})\in G$ on points has the form
\[(q_1,q_2,\ldots,q_m)^{(\theta_{1},\theta_{2},\ldots,\theta_{m})}=(q_1^{\theta_{1}},q_2^{\theta_{2}},\ldots,q_d^{\theta_{m}}),\]
where $q_i\in\mathcal{Q}$, $\theta_{i}\in\Psi$, $1\leq i\leq m$.

\begin{lemma}  \label{lem3.2}
Let $G$ act block-transitively on a Steiner $3$-design $\mathcal{D}=(\mathcal{P},\mathcal{B})$.
If $H\neq1$ is a subgroup of $G$ and has an orbit $\Gamma$ in the point set $\mathcal{P}$ with $|\Gamma|\geq3$, then
\[|\Fix_{\mathcal{P}}(H)|\leq\frac{2(v-k)}{k-2}+k-2.\]
\end{lemma}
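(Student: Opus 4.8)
The plan is to extract the bound from a single point $\gamma$ of the orbit $\Gamma$ together with a single fixed point $\alpha$, using the $\lambda_2$ blocks through the pair $\{\alpha,\gamma\}$ and showing that all but at most one of them contain at most one fixed point other than $\alpha$. Throughout I use the two elementary features of a Steiner $3$-design: any three points lie in a unique block, and two distinct blocks meet in at most two points; I also use that $(k-2)\lambda_2=v-2$ (Lemma~\ref{lem2.3}).

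The heart of the matter is the following claim: if a block $B$ meets $\Gamma$ and contains at least three points of $\Fix_{\mathcal{P}}(H)$, then $\Gamma\subseteq B$. To see this, note that three fixed points of $B$ already determine $B$ as the unique block through them, so $B^{h}=B$ for every $h\in H$; thus $B$ is $H$-invariant, and then $B\cap\Gamma$ is a nonempty $H$-invariant subset of the single $H$-orbit $\Gamma$, forcing $B\cap\Gamma=\Gamma$. Since $|\Gamma|\geq3$, two distinct blocks cannot both contain $\Gamma$; hence among the blocks through $\{\alpha,\gamma\}$ — each of which meets $\Gamma$, as it contains $\gamma$ — at most one, say $C^{\ast}$, has three or more fixed points, and for that block $\Gamma\subseteq C^{\ast}$ gives $|C^{\ast}\cap\Fix_{\mathcal{P}}(H)|\leq k-|\Gamma|\leq k-3$, the points of $\Gamma$ being non-fixed.

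Now I would count. Put $f=|\Fix_{\mathcal{P}}(H)|$; the case $f=0$ is trivial, so fix $\alpha\in\Fix_{\mathcal{P}}(H)$ and $\gamma\in\Gamma$ (note $\alpha\neq\gamma$). Sending $\beta\in\Fix_{\mathcal{P}}(H)\setminus\{\alpha\}$ to the unique block through $\{\alpha,\beta,\gamma\}$ defines a map into the set of $\lambda_2$ blocks through $\{\alpha,\gamma\}$ whose fibre over a block $C$ has size $|C\cap\Fix_{\mathcal{P}}(H)|-1$, so $f-1=\sum_{C}\bigl(|C\cap\Fix_{\mathcal{P}}(H)|-1\bigr)$ with the sum over those $\lambda_2$ blocks. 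By the claim every summand is at most $1$ except possibly the one from $C^{\ast}$, which is at most $k-4$; hence $f\leq\lambda_2+k-4$, and $f\leq\lambda_2+1$ when no such $C^{\ast}$ occurs. A routine check, substituting $\lambda_2=(v-2)/(k-2)$ and using $k\geq4$ (so $\lambda_2\geq1$), shows both of these are at most $2\lambda_2+k-4=\frac{2(v-k)}{k-2}+k-2$, as required.

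The only place that needs care is the claim; its force comes from combining $\lambda=1$ (which makes any block with three fixed points $H$-invariant) with the hypothesis $|\Gamma|\geq3$ (used both to pin down at most one $\Gamma$-containing block through $\{\alpha,\gamma\}$ and to bound that block's fixed points). I also note that block-transitivity of $G$ is not actually used here — only that $\mathcal{D}$ is a Steiner $3$-design and $\Gamma$ an $H$-orbit of size at least $3$.
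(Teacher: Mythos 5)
Your proof is correct and uses essentially the same mechanism as the paper: a block containing three $H$-fixed points is $H$-invariant (by $\lambda=1$), hence contains all of $\Gamma$, so at most one such block passes through the chosen pair, and the fixed points are then counted along the $\lambda_2$ blocks through that pair. The only difference is the choice of anchor pair — you take $\{\alpha,\gamma\}$ with $\alpha\in\Fix_{\mathcal{P}}(H)$ and $\gamma\in\Gamma$, while the paper takes both anchors inside $\Gamma$ — and your variant in fact yields the sharper intermediate bound $|\Fix_{\mathcal{P}}(H)|\leq\lambda_2+k-4$ (versus the paper's $2\lambda_2+k-4$); your side remark that block-transitivity is never needed is also accurate.
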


\begin{proof}
Let $\{\alpha,\beta,\gamma\}\subseteq\Gamma$.
By Lemma \ref{lem2.3}, the number of the blocks containing $\{\alpha,\beta\}$ is $\lambda_2$.
Notice that $\mathcal{D}$ is a block-transitive Steiner $3$-design, which implies that $\{\alpha,\beta\}$ is contained in at most one block fixed by $H$.
If there is no such block, then $H$ fixes at most two points in each block containing $\{\alpha,\beta\}$.
So $|\Fix_{\mathcal{P}}(H)|\leq2\lambda_2$.
Otherwise, $H$ can fix at most $k-3$ points in the fixed block and so $|\Fix_{\mathcal{P}}(H)|\leq2(\lambda_2-1)+(k-3)$.
Thus, applying Lemma \ref{lem2.3}, we have
\[|\Fix_{\mathcal{P}}(H)|\leq \max\{2\lambda_2,\, 2(\lambda_2-1)+(k-3)\}\leq\frac{2(v-k)}{k-2}+k-2.\]
This completes the proof.
\end{proof}

\begin{lemma}  \label{lem3.3}
Let $\mathcal{D}=(\mathcal{P},\mathcal{B})$ be a nontrivial Steiner $3$-design admitting a block-transitive point-primitive automorphism group $G$ with product type as described before Lemma \ref{lem3.2}.
Then the following hold.
\begin{enumerate}[{\rm(i)}]

\item  If $m\geq4$, then $k\leq2w+2$.

\item  If $m=3$ and $w\geq11$, then $k\leq3w+2$.

\end{enumerate}
\end{lemma}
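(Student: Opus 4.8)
The plan is to apply Lemma \ref{lem3.2} to a carefully chosen nontrivial subgroup $H\le G$, and then to combine the resulting inequality with the block-size estimate of Lemma \ref{lem2.4}. From $H$ I want, on the one hand, a large fixed-point set in $\mathcal P$ — ideally a full ``slice'' $\{q\}\times\mathcal Q^{m-1}$ of size $w^{m-1}$ — and, on the other hand, a point-orbit of length at least $3$, so that Lemma \ref{lem3.2} is applicable. Producing such an $H$ is the only non-routine point.

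I would set things up as follows. Since $G$ is point-primitive of product type, each component $\Psi_i\cong\Psi$ is primitive on $\mathcal Q$ of almost simple or simple diagonal type, so $N_i:=\Soc(\Psi_i)$ is a nonabelian characteristically simple group acting transitively but not regularly on $\mathcal Q$, and $N_1\times\cdots\times N_m=\Soc(G)\le G$. Fix $q\in\mathcal Q$ and put $H=(N_1)_q$; then $H\neq 1$ since $N_1$ is not regular. Viewing $N_1$ as acting on the first coordinate only, a point $(q_1,\dots,q_m)$ is fixed by $H$ exactly when $q_1\in\Fix_{\mathcal Q}(H)$, and $q\in\Fix_{\mathcal Q}(H)$; hence $\Fix_{\mathcal P}(H)\supseteq\{q\}\times\mathcal Q^{m-1}$, so $|\Fix_{\mathcal P}(H)|\ge w^{m-1}$. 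Since $H$ moves only the first coordinate, every $H$-orbit on $\mathcal P$ has the same length as the corresponding $H$-orbit on $\mathcal Q$, so it remains to see that $H=(N_1)_q$ has an orbit of length $\ge 3$ on $\mathcal Q$. If $H$ contains an element whose permutation of $\mathcal Q$ has a cycle of length $\ge 3$ (for instance an element of odd prime order, or one of order $4$), I would replace $H$ by the cyclic group it generates — this still fixes $q$, so the fixed-point bound is kept. Otherwise every element of $H$ is an involution and every $H$-orbit on $\mathcal Q$ has length $\le 2$; then a connected nontrivial orbital graph of the transitive group $N_1$ would be $2$-regular, forcing $N_1$ into a dihedral group, which is impossible for a nonabelian characteristically simple group. (When $\Psi$ is of diagonal type this is automatic, since then $w\ge 60$ and the diagonal point stabiliser of $N_1$ contains elements of order $\ge 3$.)

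Granting such an $H$, Lemma \ref{lem3.2} gives $w^{m-1}\le\frac{2(v-k)}{k-2}+(k-2)$, i.e. $w^{m-1}(k-2)\le 2(v-k)+(k-2)^2$. As $v=w^m$ and $k-2\le\sqrt v-\tfrac12$ by Lemma \ref{lem2.4}, one has $(k-2)^2<v$, whence $w^{m-1}(k-2)<3v=3w^m$ and so $k-2<3w$; this proves (ii), the hypothesis $w\ge 11$ only being needed to trim the lower-order terms down to the stated $k\le 3w+2$. For (i), assume $m\ge 4$, so that $(k-2)^2$ is small beside $w^{m-1}(k-2)$. Inserting the bound $k\le 3w+1$ just found, $(k-2)^2<9w^2=9w^{3-m}\cdot w^{m-1}\le\tfrac95 w^{m-1}$, so $w^{m-1}(k-2)<2v+\tfrac95 w^{m-1}$ and $k\le 2w+3$; inserting this in turn, $(k-2)^2\le(2w+1)^2\le w^{m-1}$ (using $m\ge4$, $w\ge5$), so $w^{m-1}(k-3)<2v=2w^m$ and hence $k\le 2w+2$.

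The step I expect to be the genuine obstacle is the verification that the auxiliary $H$ has a point-orbit of length at least $3$: this is where one must use the structure of $N_1$ as a direct power of a nonabelian simple group rather than any feature of the $3$-design. After that, (i) and (ii) drop out of one inequality in $k$, the only care being to control the term $(k-2)^2$ — through the crude estimate followed by one or two bootstrapping substitutions — so as to reach exactly $3w+2$ and $2w+2$ rather than a slightly weaker constant.
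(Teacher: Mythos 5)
Your overall strategy is exactly the paper's: take the stabilizer of one coordinate entry, observe it fixes a whole slice of $w^{m-1}$ points and has an orbit of length at least $3$, feed this into Lemma \ref{lem3.2}, and cut down the resulting inequality with Lemma \ref{lem2.4}. Your handling of the inequality (bootstrapping $(k-2)^2<v$, then $(k-2)^2\le\frac95 w^{m-1}$, then $(k-2)^2\le w^{m-1}$) is correct and in fact slightly sharper and more elementary than the paper's route, which instead writes the constraint as a quadratic in $k$ and shows its discriminant exceeds $(w^{m-1}-4w)^2$ (resp. $(w^2-6w)^2$), so that $k$ must lie below the smaller root $2w+3$ (resp. $3w+3$), the larger root being excluded by Lemma \ref{lem2.4}. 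Both computations are fine.

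The genuine gap is in the step you yourself flagged: producing an orbit of length at least $3$. The paper takes $K=(\Psi_1)_p$, whose action on the slice is permutation isomorphic to $\Psi_{\overline{q_1}}$ on $\mathcal Q\setminus\{\overline{q_1}\}$, and invokes the standard fact that a nonregular \emph{primitive} group with nonabelian socle has every nontrivial subdegree at least $3$ (a primitive group with a suborbit of length $2$ is dihedral of prime degree). You instead take $H=(N_1)_q$ with $N_1=\Soc(\Psi_1)$ --- a reasonable choice, since $\Psi_1$ itself need not lie in $G$ --- but $N_1$ is only \emph{transitive} on $\mathcal Q$, not primitive, and your argument breaks at two points. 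First, the dichotomy ``some element has a cycle of length $\ge3$, or else every element is an involution \emph{and} every orbit has length $\le2$'' is false as stated: an elementary abelian $2$-group can have orbits of length $4$ (harmless here, since $4\ge3$, but it shows the case division is wrong). Second, and more seriously, if all $(N_1)_q$-orbits really do have length $\le2$, the orbital graphs of the merely transitive group $N_1$ need not be connected, so you cannot conclude that $N_1$ embeds in a dihedral group; the ``connected $2$-regular orbital graph'' argument is only valid for primitive groups. Note also that $N_1$ transitive with point stabilizer an involution (e.g.\ the action on cosets of a subgroup of order $2$) automatically has all suborbits of length $\le2$, so ruling this out genuinely requires using primitivity of $\Psi$ (for instance: $\Fix_{\mathcal Q}((N_1)_q)$ is a block for $\Psi$, hence equals $\{q\}$ when $N_1$ is nonregular, and one must then still exclude the case that $(N_1)_q$ is an elementary abelian $2$-group with all orbits of length $2$). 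Your parenthetical claim that $N_1$ is nonregular is likewise asserted without proof. As it stands this key step is not established.
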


\begin{proof}
Let $K=(\Psi_1)_p\leq G$, where $p=(\overline{q_1},\overline{q_2},\ldots,\overline{q_m})\in\mathcal{P}$.
Clearly, $v=w^m$ and $|\Fix_{\mathcal{P}}(K)|\geq w^{m-1}.$
We consider the action of $K$ on $\{(q_1,\overline{q_2},\ldots,\overline{q_m})\mid q_1\in\mathcal{Q}\}$, which is permutation isomorphic to the action of $\Psi_{\overline{q_1}}$ on $\mathcal{Q}\setminus\{\overline{q_1}\}$.
Because $\Psi$ is a nonregular primitive group acting on $\mathcal{Q}$ with a nonabelian socle, there exists a subdegree $d\geq3$ of $\Psi$.
By Lemma \ref{lem3.2}, we get
\[w^{m-1}\leq|\Fix_{\mathcal{P}}(K)|\leq\frac{2(w^m-k)}{k-2}+k-2.\]
We can rewrite this as an inequality in $k$,
\[0\leq k^2-(w^{m-1}+6)k+2w^m+2w^{m-1}+4.\]
The discriminant of this, when considered as a quadratic equation, is given by $D=w^{2m-2}-8w^m+4w^{m-1}+20$.
If $m\geq4$, then
\[D=(w^{m-1}-4w)^2+4w^{m-1}-16w^2+20>(w^{m-1}-4w)^2.\]
Hence we get that either $k>w^{m-1}-2w+3$ or $k<2w+3$.
By Lemma \ref{lem2.4}, only $k<2w+3$ can occur.
If $m=3$ and $w\geq11$, the discriminant satisfies
\[D=(w^2-6w)^2+4w^3-32w^2+20>(w^2-6w)^2.\]
Using Lemma \ref{lem2.4} again, we obtain $k<3w+3$.
\end{proof}

\begin{proposition}  \label{pro3.4}
Let $G$ act as a block-transitive point-primitive automorphism group of a nontrivial Steiner $3$-design $\mathcal{D}=(\mathcal{P},\mathcal{B})$.
Then $G$ is not of product type.
\end{proposition}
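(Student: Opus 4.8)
The plan is to run a case analysis on the number $m$ of factors of the wreath product $\Psi\wr S_m$, keeping in mind that $v=w^m$ with $m\geq2$ and $w\geq5$. Four ingredients will be combined throughout: the bounds of Lemma~\ref{lem3.3} (namely $k\leq2w+2$ when $m\geq4$, and $k\leq3w+2$ when $m=3$ and $w\geq11$); the divisibility statement of Lemma~\ref{lem2.9}(ii); the estimate $k\leq\lfloor\sqrt{v}+3/2\rfloor$ of Lemma~\ref{lem2.4}, which gives $k\leq w+1$ when $m=2$; and the ``flag-transitive shortcut'': whenever $k\mid v$ and $4\mid v$, Lemma~\ref{lem2.7} makes $G$ flag-transitive, so by Theorem~\ref{th1.1}(ii) $G$ is $2$-homogeneous on points, hence of affine or almost simple type, contrary to the product hypothesis. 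This shortcut will be used repeatedly to dispose of small cases.

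The extra tool needed is a polynomial bound on a nontrivial subdegree of $G$ on $\mathcal{P}=\mathcal{Q}^m$. Fix $\alpha=(\overline{q_1},\ldots,\overline{q_m})\in\mathcal{P}$. Every element of $G\leq\Psi\wr S_m$ permutes the coordinates and applies bijections of $\mathcal{Q}$ coordinate-wise, hence preserves the Hamming distance on $\mathcal{Q}^m$; so the set of points at Hamming distance $1$ from $\alpha$ is $G_\alpha$-invariant. It has cardinality $m(w-1)$, and since $G$ is primitive and not regular the only fixed point of $G_\alpha$ is $\alpha$ itself, so each $G_\alpha$-orbit contained in this set has size at least $2$. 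Hence $G$ has a subdegree $d$ with $2\leq d\leq m(w-1)$, and Lemma~\ref{lem2.9}(ii) yields the master inequality
\[(v-1)(v-2)\leq k(k-1)(k-2)\,d(d-1)\leq k(k-1)(k-2)\cdot m(w-1)\bigl(m(w-1)-1\bigr),\]
in which $v=w^m$.

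When $m\geq4$ we insert $k\leq2w+2$. A crude estimate of the right-hand side shows that the master inequality is already violated for all $w\geq5$ if $m\geq5$, and for all $w\geq6$ if $m=4$; the single remaining pair $(m,w)=(4,5)$ is eliminated by the exact values, as $624\cdot623>12\cdot11\cdot10\cdot16\cdot15$. When $m=3$ and $w\geq11$ we insert $k\leq3w+2$; the master inequality alone now only confines $w$ to an explicit finite interval, so one passes to the genuine divisibility $(w^3-1)(w^3-2)\mid k(k-1)(k-2)d(d-1)$. Since $w^3-1$ and $w^3-2$ are coprime and each of $k,k-1,k-2,d,d-1$ is at most $3w+2$, any prime factor of $(w^3-1)(w^3-2)$ that exceeds $3w+2$ yields a contradiction; using $w^3-1=(w-1)(w^2+w+1)$ this removes all but a short list of ``smooth'' values of $w$, which are then settled by hand, with the flag-transitive shortcut at hand. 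The finitely many pairs $(w,k)$ remaining when $m=3$ and $5\leq w\leq10$ are handled similarly: the integrality of $b$, of $r$, and of $\lambda_2=(v-2)/(k-2)$ coming from Lemmas~\ref{lem2.2} and~\ref{lem2.3} pins down $k$, and then either $k\mid v$ with $4\mid v$ (shortcut), or Lemma~\ref{lem2.9}(ii) with $d\leq3(w-1)$ forces a prime factor of $(v-1)(v-2)$ that is too large to divide $k(k-1)(k-2)d(d-1)$.

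The delicate case is $m=2$, where Lemma~\ref{lem3.3} provides no information and $v=w^2$ with $w$ a priori unbounded. Here one works with $k\leq w+1$, the relation $(k-2)\mid(w^2-2)$ with $k-2\leq w-1$ together with the integrality of $r$ and $b$ from Lemmas~\ref{lem2.2} and~\ref{lem2.3}, the flag-transitive shortcut, and Lemma~\ref{lem2.9}(ii) with $d\leq2(w-1)$: the arithmetic constraints leave only a few candidate values of $k$, and in each one either $k=4$ (so $w$ is even, $4\mid v$, and the shortcut applies) or --- using $\gcd(w^2-1,w^2-2)=1$ as before --- a large prime factor of $w^2-2$ contradicts Lemma~\ref{lem2.9}(ii). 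This $m=2$ analysis, and to a lesser extent the mid-range $m=3$, $w\geq11$ case, is where the main difficulty lies: there the bound on $k$ is only linear in $w$ while $v=w^m$ grows faster, so the counting inequality is no longer self-contradictory and one must descend to honest divisibility --- integrality of the design parameters, coprimality of $w^m-1$ and $w^m-2$, and information about the (finitely many) possible primitive groups $\Psi$ --- while systematically falling back on the reduction to the flag-transitive, hence affine or almost simple, case whenever the block size divides the number of points.
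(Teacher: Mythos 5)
Your reduction for $m\geq4$ is sound and genuinely different from the paper's: bounding a nontrivial subdegree by $m(w-1)$ via the Hamming sphere and feeding it into Lemma~\ref{lem2.9}(ii) together with $k\leq 2w+2$ does kill every case, including $(m,w)=(4,5)$. The trouble starts afterwards. For $m=3$ your master inequality only confines $w$ to roughly $11\leq w\leq 250$, and the promised finish --- ``any prime factor of $(w^3-1)(w^3-2)$ exceeding $3w+2$ gives a contradiction, the smooth $w$ are settled by hand'' --- is not a proof: already $w=18$ gives $(w^3-1)(w^3-2)=2\cdot5\cdot7^3\cdot11\cdot17\cdot53$ with every prime below $3w+2=56$, and eliminating it (and a couple of hundred similar values) requires ad hoc work you have not done. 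For $m=2$ the gap is fatal rather than merely laborious: $w$ is unbounded, $(v-1)(v-2)\approx w^4$ while $k(k-1)(k-2)d(d-1)$ can be as large as roughly $4w^5$, so Lemma~\ref{lem2.9}(ii) yields no size contradiction, and the claim that ``the arithmetic constraints leave only a few candidate values of $k$'' is unsupported: there are values of $w$ (e.g.\ $w=10,23,40$) for which $w^2-2$ has no prime factor exceeding $2(w-1)$, the subdegree $d$ is only bounded rather than known, and nothing you list rules such $w$ out. The flag-transitive shortcut also rarely fires, since $k\mid v=w^2$ is a strong extra hypothesis you cannot force.

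What is missing is the paper's key invariant. By block-transitivity the number $x_2$ of pairs of points of a block that differ in exactly one coordinate is the same for every block (and is at least $3$, since some block contains three points agreeing in all but one coordinate), and double counting gives $mk(k-1)=2x_2\bigl(w^{m-1}+\cdots+1\bigr)$, together with an analogous identity for triples. For $m\geq3$ these identities combined with Lemma~\ref{lem3.3} produce an immediate numerical contradiction, with no case analysis over $w$ at all. For $m=2$ the identity reads $k(k-1)=x_2(w+1)$, whence $x_2\leq k-1$ because $k\leq w+1$; the contradiction then comes from a structural analysis of a block $B$ through three points sharing their first coordinate: since a block is determined by any three of its points, $(\Psi_1)_{p_1}$ and a suitable $(\Psi_2)_{q_i}$ both stabilize $B$, and tracking their orbits on $B$ forces $x_2\geq\binom{i-1}{2}+\sum_{j}\binom{a_j}{2}\geq k$. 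This exact count combined with the block-stabilizer argument is the heart of the proof, and it cannot be replaced by the divisibility bookkeeping you propose.
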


\begin{proof}
Let the automorphism group $G$ act on $\mathcal{D}$ with product type as described before Lemma \ref{lem3.2}.
Then, $v=w^m$.
Consider the block $B_1$ which contains three points $(u_1,\ldots,u_{m-1},x)$, $(u_1,\ldots,u_{m-1},y)$ and $(u_1,\ldots,u_{m-1},z)$.
Let $B_2$ be another block.
Then there exists $\gamma \in G$ such that $B_2=B_1^{\gamma}$ and $\gamma$ has the form $(\theta_{1},\theta_{2},\ldots,\theta_{m})\sigma$, where each $\theta_{i}\in\Psi$ and $\sigma\in S_m$.
It can be seen that each block contains at least two points which disagree in only one position, and clearly, the number of such pairs in a block is independent of the choice of the block.
Let this number be $x_2$.
Analogously, we define $x_3$ as the number of the three points in a block which disagree in exactly one position.
Then $x_2\geq3$ and $x_3\geq1$.
Counting the total number of such two or three points in $\mathcal{P}$ in two ways, together with Lemma \ref{lem2.2} we have
\[\frac{x_2b}{\lambda_2}=x_2\frac{w^m(w^m-1)}{k(k-1)}=\frac{w^mm(w-1)}{2}\]
and
\[x_3b=x_3\frac{w^m(w^m-1)(w^m-2)}{k(k-1)(k-2)}=\frac{w^mm(w-1)(w-2)}{3!}.\]
By simplifying these two equations, we get
\begin{equation} \label{eq1}
mk(k-1)=2x_2(w^{m-1}+w^{m-2}+\cdots+1)
\end{equation}
and
\begin{equation} \label{eq2}
mk(k-1)(k-2)=6x_3(w^{m-1}+w^{m-2}+\cdots+1)\frac{w^{m-2}}{w-2}.
\end{equation}

First we deal with the case when $m\geq3$.
If $m\geq4$, then by Lemma \ref{lem3.3}, we obtain
\[mk(k-1)\leq m(2w+2)(2w+1)<2(w^{m-1}+w^{m-2}+\cdots+1),\]
which contradicts Equation (\ref{eq1}).
If $m=3$ and $w\geq11$, then by Lemma \ref{lem3.3} again, we have
\[3k(k-1)(k-2)\leq9w(3w+1)(3w+2)<6(w^{m-1}+w^{m-2}+\cdots+1)\frac{w^{m-2}}{w-2},\]
yielding a contradiction to Equation (\ref{eq2}).
When $m=3$ and $5\leq w\leq11$, it can be easily calculated that there is no solution of $k$ satisfying Equations (\ref{eq1}), (\ref{eq2}) and Lemma \ref{lem2.4}.

Finally, it remains to consider the case when $m=2$. Now $v=w^2$ and Equation (\ref{eq1}) becomes
\[k(k-1)=x_2(w+1).\]
Furthermore, by Lemma \ref{lem2.4}, we get $k\leq w+1$ and thus $x_2<k$.

Let $B=\{(p_1,q_1),(p_2,q_2),\cdots,(p_k,q_k)\}$ be a block with $p_1=p_2=p_3$.
Note that neither all $p_i$ nor all the $q_i$ can be the same for otherwise $x_2=\binom{k}{2}\geq k$.
So by rearranging the members of $B$, we can suppose that there exists an $i\geq4$ such that $p_1=p_2=\cdots=p_{i-1}\neq p_j$ for any $j\geq i$.
As $(p_1,q_1)$, $(p_2,q_2)$ and $(p_3,q_3)$ are fixed by $(\Psi_1)_{p_1}$, we have that $(\Psi_1)_{p_1}$ fixes $B$.
Moreover, $\Psi$ is a nonregular primitive group acting on $\mathcal{Q}$ with a nonabelian socle, which implies that there exists a subdegree $d\geq3$ of $\Psi$ on $\mathcal{Q}$.
Hence it can be assumed that there are three elements $(p_i,q_i),(r,q_i),(s,q_i)\in B$, where $r,s\in\mathcal{Q}$ are in the orbit of $p_i$ under the action of $\Psi_{p_1}$.
It follows that $(\Psi_2)_{q_i}$ also fixes $B$ and $k\geq6$.

Consider the action of $(\Psi_1)_{p_1}$ and $(\Psi_2)_{q_i}$ on $B$.
There are two possibilities for $B$, one of which is that
\[B=\{(p_1,q_1),(p_1,q_2),\ldots,(p_1,q_{i-1}),(p_i,q_i),(p_{i+1},q_i),\ldots,(p_k,q_i)\}.\]
Accordingly, depending on whether $q_i=q_j$ for some $j<i$,
\[x_2=\binom{i-1}{2}+\binom{k-i+2}{2}\quad {\rm or} \quad x_2=\binom{i-1}{2}+\binom{k-i+1}{2}.\]
However, both are contradictory to $x_2<k$.

The remaining possibility is that there exists a point $(p,q)\in B$ with $p\neq p_1$ and $q\neq q_i$.
In fact, for any $(p,q)\in B$ with $p\neq p_1$ and $q\neq q_i$, by the action of $(\Psi_1)_{p_1}$, there are at least three points of $B$ with $q$ in the second coordinate, say $(p,q)$, $(p',q)$ and $(p'',q)$.
Thus, $B$ can be written as
\[B=\{(p_1,q_1),(p_1,q_2),\ldots,(p_1,q_{i-1}),(p_i,q_i),(r,q_i),(s,q_i)\ldots,(p,q),(p',q),(p'',q),\ldots\}.\]
Assume that $\mathcal{Q}=\{o_1,o_2,\ldots,o_w\}$ and $a_j$ is the number of occurrences of $o_j$ at the second coordinate of members of $\{(x,y)\in B\mid x\neq p_1\}$.
Then $\sum\limits_{j=1}^w a_j=k-i+1$.
From the discussion above, we know that $a_j=0$ or $a_j\geq3$ for $j=1,2,\ldots,w$.
Therefore, we obtain
\[x_2\geq\binom{i-1}{2}+\sum_{j=1}^w \binom{a_j}{2}\geq k,\]
which is a contradiction.
\end{proof}

\begin{proofof}
It follows from Proposition \ref{pro3.1} and \ref{pro3.4}.
\end{proofof}

\section*{Acknowledgments}
The authors sincerely thank Zheng Huang, Wei Jin and Yongtao Li for consistent encouragement and valuable advice.

\end{document}